\newcounter{num}[section]
\newenvironment{theorem}
{\refstepcounter{num}%
\bigskip\noindent\nopagebreak[4]{\bf Theorem~\arabic{section}.\arabic{num}. }\it}
\newenvironment{corollary}
{\refstepcounter{num}%
\bigskip\noindent\nopagebreak[4]{\bf Corollary~\arabic{section}.\arabic{num}. }\it}
\newenvironment{lemma}
{\refstepcounter{num}%
\bigskip\noindent\nopagebreak[4]{\bf Lemma~\arabic{section}.\arabic{num}. }\it}
\newcommand{\LL}{{\mathcal{L}}}
\newcommand{\Ss}{{\mathbf{S}}}
\newcommand{\V}{{\mathrm{V}}}
\newcommand{\pr}{{\prime}}
\newcommand{\al}{{\alpha}}
\newcommand{\M}{{\mathcal{M}}}
\renewcommand{\c}{{\mathbf{c}}}
\newcommand{\1}{{^{-1}}}
\begin{document}

\author{Artem N. Shevlyakov}
\title{On disjunction of equations in completely regular semigroups}

\maketitle

\abstract{A semigroup $S$ is called an equational domain if any finite union of algebraic sets over $S$ is algebraic. We prove if a completely regular semigroup $S$ is an equational domain then $S$ is completely simple.}

\section{Introduction}

In~\cite{AG_over_groupsI,AG_over_groupsII} the main definitions of algebraic geometry over groups were given. Following these papers, an equation over a group $G$ is an equality $w(X)=1$, where $w(X)$ is an element of the free product $G\ast F(X)$. An algebraic set over $G$ is a solution set of a system of equations over $G$. 

The union of finite number of algebraic sets is not necessary algebraic over $G$. However, there exist groups (which were completely described in~\cite{uniTh_IV}), where any finite union of algebraic sets is algebraic. Following~\cite{uniTh_IV}, the groups with such property are called equational domains.  

The class of semigroups is more wider than the class of groups. Thus, it is naturally to search equational domains in the varieties of semigroups which are close to groups. One of the ``group-like'' class is the variety of completely regular semigroups, since any completely regular semigroup is the union of its maximal subgroups. 

In the current paper we prove that completely regular equational domain is completely simple.

\section{Definitions of semigroup theory}

Let us give the necessary definitions of semigroup theory. For a detailed introduction in semigroup theory one can recommend~\cite{howie}.

A semigroup $S$ with a unique two-sided ideal $I=S$ is called \textit{simple}.
A simple semigroup $S$ is \textit{completely simple} if it has minimal left and right ideals.

A semigroup $S$ is {\it completely regular (c.r.)} if it is a union of its maximal subgroups.  It is known that any completely simple semigroup is c.r.

A \textit{semilattice} $\Omega$ is a commutative idempotent semigroup, i.e. $\Omega$ satisfies the identities
\[
xy=yx,\; xx=x.
\] 

One can define a partial order $\leq$ over a semilattice $\Omega$ by
\[
x\leq y \Leftrightarrow xy=x.
\]

Let $\Omega$ be a semilattice. A disjount union of semigroups $S=\{S_\al|\al\in\Omega\}$ is called a \textit{semilattice of semigroups} if for any pair $\al,\beta\in\Omega$, $\al\leq\beta$ there exists a homomorphism $\psi_{\al,\beta}\colon S_\beta\to S_\al$ such that 
\begin{enumerate}
\item $\psi_{\al,\al}$ is trivial;
\item $\psi_{\al,\beta}\circ\psi_{\beta,\gamma}=\psi_{\al,\gamma}$ for all $\al\leq\beta\leq\gamma$;
\item $\psi_{\gamma\beta,\gamma}\circ\psi_{\gamma,\al}=\psi_{\gamma\beta,\beta}\circ\psi_{\beta,\al}$ for all $\beta,\gamma\leq\al$;
\item the product of elements $s_1\in S_\al$, $s_2\in S_\beta$ is defined by
\begin{equation}
\label{eq:multiplication_in_lattice_of_semigroups}
s_1s_2=\psi_{\al\beta,\al}(s_1)\psi_{\al\beta,\beta}(s_2).
\end{equation}
\end{enumerate}

The next theorem establishes the connections between completely regular semigroups and semilattices.

\begin{theorem}\textup{\cite{clifford_clifford_semigroups}}
\label{th:clifford_semigroups_structure}
Any c.r. semigroup is isomorphic to a semilattice of completely simple semigroups.
\end{theorem}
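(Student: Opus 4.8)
The plan is to produce the decomposition from Green's relation $\mathcal{J}$ on $S$, taking the indexing semilattice to be the quotient $S/\mathcal{J}$ and the completely simple components to be the $\mathcal{J}$-classes. First I would record the unary structure of a c.r. semigroup: every $a\in S$ lies in a maximal subgroup with identity the idempotent $a^{0}=aa^{-1}$, where $a^{-1}$ is the group inverse of $a$. In particular $a$ and $a^{2}$ lie in the same subgroup, so $a\mathrel{\mathcal{H}}a^{2}$ and hence $J_{a}=J_{a^{2}}$; more generally $a^{0}\mathrel{\mathcal{H}}a$, so each idempotent $a^{0}$ is $\mathcal{J}$-equivalent to $a$.

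The central step is to show that $\mathcal{J}$ is a congruence on $S$ and that the induced multiplication on $\Omega:=S/\mathcal{J}$ makes it a semilattice. Granting left/right compatibility of $\mathcal{J}$, the quotient operation $J_{a}J_{b}=J_{ab}$ is well defined; it is idempotent because $J_{a}J_{a}=J_{a^{2}}=J_{a}$, and commutative because $(ab)^{2}=a(ba)b\in S^{1}(ba)S^{1}$ gives $J_{ab}=J_{(ab)^{2}}\le J_{ba}$, and symmetrically $J_{ba}\le J_{ab}$, whence $J_{ab}=J_{ba}$. Thus $\Omega$ is a semilattice, the natural map $S\to\Omega$ is a semilattice congruence, and writing $S_{\alpha}$ for the $\mathcal{J}$-class indexed by $\alpha\in\Omega$ we obtain a disjoint decomposition $S=\bigcup_{\alpha}S_{\alpha}$ with $S_{\alpha}S_{\beta}\subseteq S_{\alpha\beta}$.

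Next I would check that each component $S_{\alpha}$ is completely simple. Since $\alpha\alpha=\alpha$, $S_{\alpha}$ is a subsemigroup, and it is c.r. because for $a\in S_{\alpha}$ the whole group $H_{a^{0}}$ lies in $S_{\alpha}$ (as $a^{0}\mathrel{\mathcal{J}}a$). For simplicity, take $x,y\in S_{\alpha}$ and write $y=uxv$ with $u,v\in S^{1}$; putting $e=y^{0}$ (so $e\in S_{\alpha}$ and $eye=y$) gives $y=(eu)\,x\,(ve)$, and since $eu\in S^{1}eS^{1}$ we have $J_{eu}\le J_{e}=\alpha$, while $\alpha=J_{y}=J_{(eu)x(ve)}\le J_{eu}$; hence $eu\in S_{\alpha}$, and likewise $ve\in S_{\alpha}$. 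Thus $y\in S_{\alpha}xS_{\alpha}$, so $S_{\alpha}$ is a single $\mathcal{J}$-class, i.e. simple. A simple c.r. semigroup is completely simple, complete regularity supplying primitive idempotents and hence minimal left and right ideals, so each $S_{\alpha}$ is completely simple.

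The main obstacle is the first claim of the second step: proving that $\mathcal{J}$ is a congruence on a c.r. semigroup (equivalently that $\mathcal{J}=\mathcal{D}$ and is compatible), which is exactly where the hypothesis of complete regularity does the work; the idempotency, commutativity, and simplicity arguments above then follow fairly mechanically. What remains is bookkeeping: to match the decomposition to the definition given above one must exhibit the connecting homomorphisms $\psi_{\alpha,\beta}$ and verify the product rule~\eqref{eq:multiplication_in_lattice_of_semigroups}, reading them off from the multiplication $S_{\alpha}\times S_{\beta}\to S_{\alpha\beta}$; I expect this reconciliation with the paper's precise formalization to be the one genuinely delicate point beyond the congruence argument.
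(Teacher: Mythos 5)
The paper offers no proof of this theorem at all --- it is quoted from Clifford's 1941 paper --- so your argument must stand on its own; the route you chose, via Green's relation $\mathcal{J}$, is the standard textbook proof (Theorem 4.1.3 in Howie's book, which the paper cites). Your sketch of that route is essentially right, but note that the step you defer as the ``main obstacle'' (compatibility of $\mathcal{J}$) is closable with exactly the two facts you already proved, namely $J_a=J_{a^2}$ and $J_{ab}=J_{ba}$: if $a=xby$ with $x,y\in S^1$, then for any $c\in S$,
\[
J_{ac}=J_{xbyc}\leq J_{byc}=J_{(yc)b}\leq J_{cb}=J_{bc},
\]
and symmetrically $J_{bc}\leq J_{ac}$; left compatibility is dual. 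So that gap is real as written --- you assert the central claim rather than prove it --- but it is one you could have filled with your own tools, and a complete write-up must do so.

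The serious problem is the step you dismiss as ``bookkeeping.'' The definition of a semilattice of semigroups in this paper is not the weak one ($S_\alpha S_\beta\subseteq S_{\alpha\beta}$) that your $\mathcal{J}$-decomposition produces: it demands connecting homomorphisms $\psi_{\alpha,\beta}$ satisfying the product rule~(\ref{eq:multiplication_in_lattice_of_semigroups}), i.e.\ what is standardly called a \emph{strong} semilattice of semigroups. Those homomorphisms cannot be ``read off from the multiplication,'' because in general they do not exist. Take $S=\{e,a,b\}$, where $\{a,b\}$ is a left-zero semigroup ($ab=a$, $ba=b$) and $e$ is an identity element for $S$. This is a band, hence c.r.; its only decomposition into completely simple components is $S_1=\{e\}$, $S_0=\{a,b\}$ with $1>0$, and a structure map $\psi_{0,1}$ would have to satisfy both $a=ea=\psi_{0,1}(e)\,a=\psi_{0,1}(e)$ and $b=eb=\psi_{0,1}(e)\,b=\psi_{0,1}(e)$, which is impossible. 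So the theorem with the paper's definition is false, and no completion of your plan (or anyone's) can prove it; what your argument establishes, once the congruence step is filled in, is Clifford's theorem in its correct weak formulation. The mismatch is a defect of the paper's definition rather than of your decomposition, but you should identify it as an obstruction and state which version you are actually proving, instead of expecting a reconciliation that cannot exist.
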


\section{Algebraic geometry over semigroups}

All definitions below are derived from the general notions of~\cite{uniTh_I}, where the definitions of algebraic geometry were formulated for an arbitrary algebraic structure in the language with no predicates.

As the inversion ${ }\1$ (the taking the inverse element in the corresponding maximal subgroup) is algebraic in any c.r. semigroup, one can consider the language $\LL_0=\{\cdot,\1\}$. For a given c.r. semigroup $S$ one can extend $\LL_0$ by new constants $\{s|s\in S\}$ which correspond to the elements of the semigroup $S$. The obtained language is denoted by $\LL_S$ and further all semigroups are considered in such language.

Let $X$ be a finite set of variables $x_1,x_2,\ldots,x_n$. \textit{A term} of a language $\LL_S$ ($\LL_S$-term) in variables $X$ is one of the next expressions:
\begin{enumerate}
\item variable $x_i$;
\item constant $s$;
\item the product of two terms;
\item $(t(X))\1$, where $t(X)$ is a term.
\end{enumerate}

For example, the expressions $xs(y^2x)\1$, $((xs_1\1 y)\1 s_2)\1 x^2$ are $\LL_S$-terms.

{\it An equation} over $\LL_S$ is an equality of two $\LL_S$-terms $t(X)=s(X)$. {\it A system of equations} over $\LL_S$ ({\it a system} for shortness) is an arbitrary set of equations over $\LL_S$.
 
A point $P=(p_1,p_2,\ldots,p_n)\in S^n$ is a \textit{solution} of a system $\Ss$ in variables $x_1,x_2,\ldots,x_n$, if the substitution $x_i=p_i$ reduces any equation of $\Ss$ to a true equality in the semigroup $S$. The set of all solutions of a system $\Ss$ in the semigroup $S$ is denoted by $\V_S(\Ss)$. A set $Y\subseteq S^n$ is called  {\it algebraic} over the language $\LL_S$ if there exists a system over $\LL_S$ in variables $x_1,x_2,\ldots,x_n$ with the solution set $Y$. 

Following~\cite{uniTh_IV}, let us give the main definition of our paper.

A c.r. semigroup $S$ is an {\it equational domain} ({\it e.d.} for shortness) in the language $\LL_S$ if for any finite set of algebraic sets $Y_1,Y_2,\ldots,Y_n$ over $\LL_S$ the union $Y=Y_1\cup Y_2\cup\ldots\cup Y_n$ is algebraic.

\section{Main result}
\label{sec:semilattices_of_semigroups}

According Theorem~\ref{th:clifford_semigroups_structure}, we denote a c.r. semigroup by $S=\{S_\al|\al\in\Omega\}$, where $\Omega$ is a semilattice and $S_\al$ are completely simple semigroups.

\begin{lemma}
\label{l:value_of_term_in_semilattice}
Let $S=\{S_\al|\al\in\Omega\}$ be a c.r. semigroup. Suppose an $\LL_S$-term $t(x)$ contains the constants from the subsemigroups with indexes $\al_1,\al_2,\ldots,\al_n$. Hence, the value of $t(x)$ at the point $x\in S_\beta$ belongs to the subsemigroup $S_\gamma$, where 
\[
\gamma=\al_1\al_2\ldots\al_n\beta.
\]
\end{lemma}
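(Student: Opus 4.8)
The plan is to track, for each subterm, which block $S_\delta$ its value falls into, and to show that this index is always the product (the meet) in $\Omega$ of the indices of the atoms — the variable and the constants — occurring in that subterm. The main tool is the \emph{index map} $\phi\colon S\to\Omega$ sending every element of $S_\al$ to $\al$. I would first observe that $\phi$ is a semigroup homomorphism: for $s_1\in S_\al$ and $s_2\in S_\beta$, formula~\eqref{eq:multiplication_in_lattice_of_semigroups} writes $s_1s_2$ as a product of $\psi_{\al\beta,\al}(s_1)$ and $\psi_{\al\beta,\beta}(s_2)$, both of which lie in $S_{\al\beta}$ (since $\al\beta\le\al$ and $\al\beta\le\beta$); as $S_{\al\beta}$ is a subsemigroup this forces $s_1s_2\in S_{\al\beta}$, i.e. $\phi(s_1s_2)=\al\beta=\phi(s_1)\phi(s_2)$.

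Before the induction I would record the one fact that does not follow directly from~\eqref{eq:multiplication_in_lattice_of_semigroups}, and which I expect to be the only real point of work: that inversion preserves the index, $\phi(u\1)=\phi(u)$ for every $u\in S$. Writing $\mu=\phi(u)$ and applying $\phi$ to the group identities $uu\1u=u$ and $u\1uu\1=u\1$, then using that $\Omega$ is commutative and idempotent, gives $\mu\,\phi(u\1)=\mu$ and $\phi(u\1)\,\mu=\phi(u\1)$. In the order of $\Omega$ these read $\mu\le\phi(u\1)$ and $\phi(u\1)\le\mu$ respectively, whence $\phi(u\1)=\mu$. Concretely this expresses that the maximal subgroup of $S$ containing $u$ lies entirely inside the block $S_\mu$.

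With these two facts the lemma follows by structural induction on $t$ along the four clauses defining an $\LL_S$-term. The base cases are $t(x)=x$, where $\phi(t(p))=\phi(p)=\beta$ for $p\in S_\beta$, and $t=s$ with $s\in S_{\al_i}$, where $\phi(t)=\al_i$. For a product $t=t_1t_2$ the homomorphism property gives $\phi(t(p))=\phi(t_1(p))\phi(t_2(p))$, and for $t=(t_1)\1$ the inversion fact gives $\phi(t(p))=\phi(t_1(p))$. Hence $\phi(t(p))$ equals the product in $\Omega$ of the indices of all atoms occurring in $t$; because $\Omega$ is idempotent and commutative, order and repetitions are irrelevant, so this product collapses to $\al_1\al_2\ldots\al_n\beta$, the meet of the distinct constant indices together with the index $\beta$ of the variable $x$ (which occurs in $t$). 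Thus $\phi(t(p))=\gamma$, that is $t(p)\in S_\gamma$, as claimed.
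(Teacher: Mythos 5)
Your proof is correct, and its inductive skeleton coincides with the paper's: both argue by structural induction on the term, and in the product case both invoke formula~(\ref{eq:multiplication_in_lattice_of_semigroups}) to conclude that a product of elements of $S_\al$ and $S_\beta$ lies in $S_{\al\beta}$; your index map $\phi$ is a convenient repackaging of that bookkeeping. The genuine difference is in the inversion case, which is the only step with real content. The paper handles it by remarking that $S_{\gamma^\pr}$ is itself c.r.\ and concluding $(t^\pr(x))\1\in S_{\gamma^\pr}$; this tacitly uses the structural fact that the maximal subgroup of $S$ containing an element of $S_{\gamma^\pr}$ lies wholly inside $S_{\gamma^\pr}$, i.e.\ that inversion taken in $S$ does not leave the component. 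You prove exactly this point instead of citing it: applying the homomorphism $\phi$ to the identities $uu\1u=u$ and $u\1uu\1=u\1$ and using commutativity and idempotency of $\Omega$ yields $\phi(u)\le\phi(u\1)$ and $\phi(u\1)\le\phi(u)$, hence equality. Your argument is thus more self-contained --- it needs nothing about c.r.\ semigroups beyond the defining identities of ${ }\1$ and the semilattice-of-semigroups presentation --- at the cost of a little extra set-up, whereas the paper's version is shorter but leans on structure theory it does not spell out. A further small merit of your write-up: by phrasing the conclusion as ``$\phi(t(p))$ is the product of the indices of all atoms occurring in $t$,'' you make visible the hypothesis that $x$ actually occurs in $t$ (otherwise the factor $\beta$ must be dropped from $\gamma$), a point the lemma's statement and the paper's proof leave implicit.
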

\begin{proof}
Let us prove the statement of the lemma by the induction on the definition of the term $t(x)$. If  $t(x)$ is either variable or constant the lemma obviously holds.

Let $t(x)=(t^\pr(x))\1$. By the assumption of the induction, the value of $t^\pr(x)$ belongs to $S_{\gamma^\pr}$, where 
\[
\gamma^\pr=\al_1^\pr\al_2^\pr\ldots\al_n^\pr\beta 
\]
and $t^\pr(x)$ contains the constants from the subsemigroups $S_{\al_i^\pr}$ ($1\leq i\leq n$).
As the subsemigroup $S_{\gamma^\pr}$ is c.r., $t(x)\in S_{\gamma^\pr}$ and the lemma is proved.

Suppose now $t(x)=t_1(x)t_2(x)$ and the value of the term $t_i(x)$ belong to the subsemigroup  $S_{\gamma_i}$, where
\[
\gamma_i=\al_{i1}\al_{i2}\ldots\al_{in_i}\beta,
\]
and the term $t_i(x)$ contains the constants from the subsemigroups $S_{\al_{ij}}$.

By the definition of the multiplication~(\ref{eq:multiplication_in_lattice_of_semigroups}) the value of the term $t(x)$ belongs to $S_{\gamma_1\gamma_2}$, where the index
\[
\gamma_1\gamma_2=\al_{11}\al_{12}\ldots\al_{1n_1}\al_{21}\al_{22}\ldots\al_{2n_2}\beta,
\]
contains $\beta$ and the indexes of all constants occurring in $t(x)$. 
\end{proof}

\begin{lemma}
\label{l:for_bands_hom}
Let $S=\{S_\al|\al\in\Omega\}$ be a c.r. semigroup, $\al,\beta\in\Omega$, $\al\leq\beta$, $b\in S_\beta$ and $a=\psi_{\al,\beta}(b)\in S_\al$. Hence for an $\LL_S$-term $t(x)$ with $t(b)\in S_\delta$, $t(a)\in S_\gamma$ it holds $\gamma\leq \delta$ and, moreover,
\begin{equation}
\psi_{\gamma,\delta}(t(b))=t(a).
\label{eq:hom_between_values}
\end{equation}
\end{lemma}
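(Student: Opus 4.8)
The plan is to first pin down the two indices $\gamma$ and $\delta$ using Lemma~\ref{l:value_of_term_in_semilattice}, and then to establish the homomorphism identity~(\ref{eq:hom_between_values}) by structural induction on $t(x)$, following exactly the three-case breakdown already used in Lemma~\ref{l:value_of_term_in_semilattice}.

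First I would record that if $t(x)$ contains constants from $S_{\al_1},\ldots,S_{\al_n}$, then Lemma~\ref{l:value_of_term_in_semilattice} gives $\delta=\al_1\al_2\ldots\al_n\beta$ and $\gamma=\al_1\al_2\ldots\al_n\al$. Since $\al\leq\beta$ means $\al\beta=\al$, a direct computation in the semilattice yields $\gamma\delta=\al_1\ldots\al_n\al\beta=\al_1\ldots\al_n\al=\gamma$, so $\gamma\leq\delta$. This settles the first assertion and guarantees that $\psi_{\gamma,\delta}$ is defined, so that~(\ref{eq:hom_between_values}) even makes sense.

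For the identity, the base cases are immediate. If $t(x)=x$ then $\delta=\beta$, $\gamma=\al$, and~(\ref{eq:hom_between_values}) is just the defining relation $a=\psi_{\al,\beta}(b)$; if $t(x)=s$ is a constant the two values coincide and the triviality of $\psi_{\al,\al}$ (property~1) finishes it. For the inverse step $t(x)=(t^\pr(x))\1$ the indices are unchanged ($\gamma=\gamma^\pr$, $\delta=\delta^\pr$, exactly as in Lemma~\ref{l:value_of_term_in_semilattice}), so it suffices to note that a semigroup homomorphism carries each maximal subgroup into a maximal subgroup and hence commutes with the inversion ${}\1$; thus $\psi_{\gamma,\delta}((t^\pr(b))\1)=(\psi_{\gamma,\delta}(t^\pr(b)))\1=(t^\pr(a))\1$ by the inductive hypothesis.

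The main work, and the step I expect to be the obstacle, is the product case $t(x)=t_1(x)t_2(x)$. Writing $t_i(b)\in S_{\delta_i}$ and $t_i(a)\in S_{\gamma_i}$ with $\delta=\delta_1\delta_2$, $\gamma=\gamma_1\gamma_2$, I would expand both $t(b)$ and $t(a)$ through the multiplication rule~(\ref{eq:multiplication_in_lattice_of_semigroups}). Applying the homomorphism $\psi_{\gamma,\delta}$ to $t(b)=\psi_{\delta,\delta_1}(t_1(b))\,\psi_{\delta,\delta_2}(t_2(b))$ and collapsing the composites through property~2 (legitimate because $\gamma\leq\delta\leq\delta_i$) reduces $\psi_{\gamma,\delta}(t(b))$ to $\psi_{\gamma,\delta_1}(t_1(b))\,\psi_{\gamma,\delta_2}(t_2(b))$, a product inside the single component $S_\gamma$. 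On the other side, expanding $t(a)=\psi_{\gamma,\gamma_1}(t_1(a))\,\psi_{\gamma,\gamma_2}(t_2(a))$, substituting the inductive identities $t_i(a)=\psi_{\gamma_i,\delta_i}(t_i(b))$, and collapsing $\psi_{\gamma,\gamma_i}\circ\psi_{\gamma_i,\delta_i}=\psi_{\gamma,\delta_i}$ (again property~2, valid since $\gamma\leq\gamma_i\leq\delta_i$) produces the same element. The delicacy here is purely bookkeeping: at each collapse one must verify that the chain of indices is genuinely $\leq$-ordered so that property~2 applies, and one must check that after pushing everything down by $\psi_{\gamma,\cdot}$ all factors already live in the single completely simple component $S_\gamma$, so the two products may be compared directly without re-invoking~(\ref{eq:multiplication_in_lattice_of_semigroups}).
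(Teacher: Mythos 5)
Your proof is correct and takes essentially the same route as the paper's: the inequality $\gamma\leq\delta$ via Lemma~\ref{l:value_of_term_in_semilattice}, then structural induction in which the inversion case uses that $\psi_{\gamma,\delta}$ commutes with ${}^{-1}$ and the product case uses the multiplication rule~(\ref{eq:multiplication_in_lattice_of_semigroups}) together with property~2 to collapse composites. The only difference is organizational: you expand both $\psi_{\gamma,\delta}(t(b))$ and $t(a)$ to the common expression $\psi_{\gamma,\delta_1}(t_1(b))\,\psi_{\gamma,\delta_2}(t_2(b))$, whereas the paper writes the same identities as one left-to-right chain.
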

\begin{proof}
The inequality $\gamma\leq\delta$ immediately follows from Lemma~\ref{l:value_of_term_in_semilattice}. 

Let us prove~(\ref{eq:hom_between_values}) by the induction on the definition of the term $t(x)$.

If $t(x)$ is either a constant or a variable the equality~(\ref{eq:hom_between_values}) is obviously holds. 

Let $t(x)=(t^\pr(x))^{-1}$, and $t^\pr(x)$ satisfies $t^\pr(b)\in S_\delta$, $t^\pr(a)\in S_\gamma$, $\gamma\leq\delta$. By the definition of the inversion, we have $t(b)\in S_\delta$, $t(a)\in S_\gamma$.  Thus
\[
\psi_{\gamma,\delta}(t(b))=\psi_{\gamma,\delta}((t^\pr(b))^{-1})=(\psi_{\gamma,\delta}(t^\pr(b)))^{-1}=
(t^\pr(a))^{-1}=t(a).
\]

Suppose now $t(x)=t_1(x)t_2(x)$, where
\[
t_1(a)\in S_{\gamma_1},\; t_1(b)\in S_{\delta_1},\; \gamma_1\leq\delta_1,\; t_1(a)=\psi_{\gamma_1,\delta_1}(t_1(b)),
\]
\[
t_2(a)\in S_{\gamma_2},\; t_2(b)\in S_{\delta_2},\; \gamma_2\leq\delta_2,\; t_2(a)=\psi_{\gamma_2,\delta_2}(t_2(b)).
\]
According Lemma~\ref{l:value_of_term_in_semilattice}, we have $\gamma=\gamma_1\gamma_2$, $\delta=\delta_1\delta_2$, and, hence, $\gamma\leq\delta$.

Finally, we obtain
\begin{multline*}
\psi_{\gamma,\delta}(t(b))=\psi_{\gamma,\delta}(t_1(b)t_2(b))=
\psi_{\gamma,\delta}(\psi_{\delta,\delta_1}(t_1(b))\psi_{\delta,\delta_2}(t_2(b)))=\\
\psi_{\gamma,\delta}(\psi_{\delta,\delta_1}(t_1(b)))\psi_{\gamma,\delta}(\psi_{\delta,\delta_2}(t_2(b)))=
\psi_{\gamma,\delta_1}(t_1(b))\psi_{\gamma,\delta_2}(t_2(b))=\\
\psi_{\gamma_1\gamma_2,\delta_1}(t_1(b))\psi_{\gamma_1\gamma_2,\delta_2}(t_2(b))=
\psi_{\gamma_1\gamma_2,\gamma_1}(\psi_{\gamma_1,\delta_1}(t_1(b)))\psi_{\gamma_1\gamma_2,\gamma_2}(\psi(t_2(b)))=\\
\psi_{\gamma_1\gamma_2,\gamma_1}(t_1(a))\psi_{\gamma_1\gamma_2,\gamma_2}(t_2(a))=t_1(a)t_2(a)=t(a),
\end{multline*}
that proves the lemma.
\end{proof}

\begin{lemma}
\label{l:for_bands_pre}
Let $S=\{S_\al|\al\in\Omega\}$ be a c.r. semigroup, $\al,\beta\in\Omega$, $\al\leq\beta$, and an $\LL_S$-term $t(x)$ contains a constant from the subsemigroup $S_\al$. Therefore, for any $b\in S_\beta$ the elements $t(b),t(\psi_{\al,\beta}(b))$ belongs to $S_\gamma$ for some $\gamma\in\Omega$.
\end{lemma}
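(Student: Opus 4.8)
The plan is to reduce the whole statement to Lemma~\ref{l:value_of_term_in_semilattice} and then exploit the hypothesis that $t(x)$ actually contains a constant from $S_\al$. First I would list the indices $\al_1,\al_2,\ldots,\al_n$ of all constants occurring in $t(x)$ and set $P=\al_1\al_2\cdots\al_n$. By Lemma~\ref{l:value_of_term_in_semilattice}, evaluating $t$ at $b\in S_\beta$ gives $t(b)\in S_{P\beta}$, while evaluating at $\psi_{\al,\beta}(b)\in S_\al$ gives $t(\psi_{\al,\beta}(b))\in S_{P\al}$. Thus the entire lemma comes down to the single semilattice identity $P\beta=P\al$.

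Next I would use the commutative-idempotent arithmetic of $\Omega$. Because the hypothesis guarantees that $\al$ occurs among the indices $\al_1,\ldots,\al_n$, rearranging the product and absorbing the repeated factor gives $P\al=P$, which by the defining equivalence $x\leq y\Leftrightarrow xy=x$ means $P\leq\al$. Combining this with the given $\al\leq\beta$ and transitivity of $\leq$ yields $P\leq\beta$ as well, i.e. $P\beta=P$. Putting the two equalities together, both evaluation-indices collapse to the same element: $P\beta=P=P\al$. Hence $t(b)$ and $t(\psi_{\al,\beta}(b))$ both lie in $S_P$, and taking $\gamma=P$ proves the statement.

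The only real subtlety — and the step I would be most careful about — is invoking the hypothesis correctly. The conclusion genuinely needs $\al$ to appear among the constant-indices, since it is exactly this that forces $P\leq\al$ and thereby makes $P$ absorb both $\al$ and $\beta$. Without a constant from $S_\al$ present, $P$ need not lie below $\al$, and the two indices $P\al$ and $P\beta$ could differ. Everything else is a direct bookkeeping consequence of Lemma~\ref{l:value_of_term_in_semilattice} together with the idempotency and commutativity of $\Omega$, so I do not anticipate any genuinely hard computation.
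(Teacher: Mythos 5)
Your proof is correct and takes essentially the same route as the paper: both apply Lemma~\ref{l:value_of_term_in_semilattice} to identify the indices of $t(b)$ and $t(\psi_{\al,\beta}(b))$ as $\beta\al_1\cdots\al_n$ and $\al\al_1\cdots\al_n$, and then use commutativity and idempotency of $\Omega$ together with $\al\leq\beta$ and the presence of $\al$ among the constant indices to show both products coincide. The only cosmetic difference is that you phrase the collapse through the order relation ($P\leq\al\leq\beta$, hence $P\al=P\beta=P$), whereas the paper computes the two products directly after normalizing $\al_1=\al$.
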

\begin{proof}
Denote $a=\psi_{\al,\beta}(b)$. The proof of this lemma follows from Lemma~\ref{l:value_of_term_in_semilattice} which states that the elements $t(b),t(a)$ belongs to the subsemigroups with indexes
\[
\beta\al_1\al_2\ldots\al_n,\; \al\al_1\al_2\ldots\al_n,
\]
where the constants of the term $t(x)$ belong to the subsemigroups with the indexes $\al_i$.

by the condition of the lemma, the term $t(x)$ contains a constant from the subsemigroup $S_\al$. Without loss of generality, one can put $\al_1=\al$. Hence, the element $t(b)$ belongs to the subsemigroup with the index 
\[
\beta\al\al_2\ldots\al_n=\al\al_2\ldots\al_n.
\]

Obviously, the element $t(a)$ belongs to the subsemigroup with the same index:
\[
\al\al\al_2\ldots\al_n=\al\al_2\ldots\al_n.
\]
\end{proof}

\begin{lemma}
\label{l:for_bands}
Let $S=\{S_\al|\al\in\Omega\}$ be a c.r. semigroup, $\al,\beta\in\Omega$, $\al\leq\beta$. Suppose an $\LL_S$-term $t(x)$ contains a constant from the subsemigroup $S_\al$. Hence, for any $b\in S_\beta$ it holds
\[
t(b)=t(\psi_{\al,\beta}(b)).
\]
\end{lemma}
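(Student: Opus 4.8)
The plan is to combine the three preceding lemmas directly; no new induction is needed. Write $a=\psi_{\al,\beta}(b)$, which lies in $S_\al$ since $\al\leq\beta$. The statement we want is exactly $t(b)=t(a)$, and everything we need is already packaged in Lemmas~\ref{l:for_bands_hom} and~\ref{l:for_bands_pre}.

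First I would invoke Lemma~\ref{l:for_bands_pre}. Because the hypotheses of the present lemma match it exactly (namely $\al\leq\beta$ and $t(x)$ contains a constant from $S_\al$), it tells us that the two values $t(b)$ and $t(a)$ belong to one and the same subsemigroup $S_\gamma$. In the notation of Lemma~\ref{l:for_bands_hom}, where $t(b)\in S_\delta$ and $t(a)\in S_\gamma$, this says precisely that $\gamma=\delta$. This is the crucial reduction: without the constant from $S_\al$ we would only have $\gamma\leq\delta$ from Lemma~\ref{l:value_of_term_in_semilattice}, and the values could genuinely differ.

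Next I would apply Lemma~\ref{l:for_bands_hom} with the same data. It yields the homomorphism relation $\psi_{\gamma,\delta}(t(b))=t(a)$. Substituting $\delta=\gamma$ from the previous step, this becomes $\psi_{\gamma,\gamma}(t(b))=t(a)$. By the first axiom in the definition of a semilattice of semigroups, $\psi_{\gamma,\gamma}$ is trivial, i.e. the identity map on $S_\gamma$, so $\psi_{\gamma,\gamma}(t(b))=t(b)$. Chaining the two equalities gives $t(b)=t(a)=t(\psi_{\al,\beta}(b))$, which is the desired conclusion.

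I do not anticipate a genuine obstacle here, since the lemma is a clean corollary of the machinery built in Lemmas~\ref{l:value_of_term_in_semilattice}--\ref{l:for_bands_pre}. The only point requiring care is the logical bookkeeping of the two index sets: one must be certain that the $\gamma$ supplied by Lemma~\ref{l:for_bands_pre} as the common index of $t(b)$ and $t(a)$ is identified with both the $\gamma$ and the $\delta$ appearing in Lemma~\ref{l:for_bands_hom}, so that the map $\psi_{\gamma,\delta}$ collapses to $\psi_{\gamma,\gamma}$. Once that identification is made explicit, the triviality of $\psi_{\gamma,\gamma}$ finishes the proof.
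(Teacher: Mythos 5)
Your proposal is correct and follows essentially the same route as the paper's own proof: Lemma~\ref{l:for_bands_pre} gives that $t(b)$ and $t(\psi_{\al,\beta}(b))$ lie in the same component $S_\gamma$, Lemma~\ref{l:for_bands_hom} then gives $\psi_{\gamma,\gamma}(t(b))=t(\psi_{\al,\beta}(b))$, and triviality of $\psi_{\gamma,\gamma}$ finishes the argument. Your extra remark about identifying the index from Lemma~\ref{l:for_bands_pre} with both $\gamma$ and $\delta$ of Lemma~\ref{l:for_bands_hom} is exactly the bookkeeping the paper performs implicitly.
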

\begin{proof}
Denote $a=\psi_{\al,\beta}(b)$. By Lemma~\ref{l:for_bands_pre}, the elements $t(b),t(a)$ belong to the same subsemigroup $S_\gamma$. Following Lemma~\ref{l:for_bands_hom}, there exists a homomorphism $\psi_{\gamma,\gamma}(t(b))=t(a)$. By the definition of a semilattice of semigroups, the homomorphism $\psi_{\gamma,\gamma}$ is trivial. Hence, $t(a)=t(b)$.
\end{proof}

\begin{theorem}
\label{th:about_semilattices}
Any c.r. semigroup $S=\{S_\al|\al\in\Omega\}$ with $|\Omega|>1$ is not an equational domain in the language $\LL_S$.
\end{theorem}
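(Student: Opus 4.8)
The plan is to show directly that $S$ fails the defining property of an equational domain by exhibiting two algebraic sets whose union is not algebraic. Since $|\Omega|>1$, I first fix two comparable indices $\al<\beta$ in $\Omega$ (given distinct $\al',\beta'$, one of $\al'\beta'<\al'$, $\al'\beta'<\beta'$, or $\al'<\beta'$ must hold, so a strict comparison is always available). Working in four variables, I take the two coordinate diagonals
\[
Y_1=\V_S(x_1=x_2),\qquad Y_2=\V_S(x_3=x_4),
\]
which are plainly algebraic. Their union is $D=\{(p_1,p_2,p_3,p_4)\in S^4: p_1=p_2 \text{ or } p_3=p_4\}$, so if $S$ were an equational domain then $D$ would be algebraic; thus the whole argument reduces to proving that $D$ is \emph{not} algebraic.

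To do this I produce a point $q\notin D$ lying in the algebraic closure of $D$, i.e.\ a $q$ at which every equation that holds on all of $D$ also holds. Fix $b\in S_\beta$ and put $a=\psi_{\al,\beta}(b)\in S_\al$; as the components are disjoint, $a\neq b$. The point I propose is a ``crossed'' one, combining a $\psi$-related pair with a pair of distinct elements of a single component, for instance $q=(b,a,c,d)$ with $c,d\in S_\al$, $c\neq d$. (Should every component $S_\al$ be a one-element semigroup, $S$ coincides with the semilattice $\Omega$; a direct check then shows that the only equations holding on $D$ are identities, since in the language $\{\wedge,{}^{-1}\}$ every term over a semilattice is a meet of variables and constants, and two such meets agreeing on $D$ must coincide, so $D$ is again non-algebraic.) In the main case $q\notin D$, and I must verify $t(q)=s(q)$ for every equation $t=s$ holding on $D$.

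The main mechanism is the collapse supplied by Lemma~\ref{l:for_bands}. Replacing the first coordinate $b$ by its image $a$ lands on the diagonal: $(a,a,c,d)\in Y_1\subseteq D$, so $t(a,a,c,d)=s(a,a,c,d)$. Forming the unary term $\tilde t(x)=t(x,a,c,d)$, the substituted constants $a,c,d$ all lie in $S_\al$; hence, as soon as $t$ involves any of the last three coordinates, $\tilde t$ contains a constant from $S_\al$ and Lemma~\ref{l:for_bands} yields $\tilde t(b)=\tilde t(\psi_{\al,\beta}(b))=\tilde t(a)$, that is $t(q)=t(a,a,c,d)$. The same collapse applied to $s$ gives $t(q)=t(a,a,c,d)=s(a,a,c,d)=s(q)$, as wanted.

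The hard part, and the step I expect to be the real obstacle, is the \emph{degenerate} terms to which this collapse does not apply: terms depending only on the single coordinate that carries $b$, for which substituting the $S_\al$-constants changes nothing and Lemma~\ref{l:for_bands} is unavailable. Here I would not use Lemma~\ref{l:for_bands} but the sharper Lemma~\ref{l:for_bands_hom}, together with the component bookkeeping of Lemma~\ref{l:value_of_term_in_semilattice}: because $t=s$ holds on $Y_1$ and on $Y_2$ for \emph{all} values of the free coordinates, the partner term is rigidly pinned down (it must agree with the low-support term as a function along each diagonal), and the relation $t(a)=\psi_{\gamma,\delta}(t(b))$ is then used to propagate this agreement to $q$. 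Verifying that no equation true on $D$ separates $q$ through such low-support terms is where the proof must be pushed most carefully; the generic terms are disposed of by the routine collapse above.
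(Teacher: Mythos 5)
Your overall strategy is the same as the paper's: exhibit a union of two algebraic sets and a point outside it at which every equation valid on the union must hold. The paper does this in two variables with $\M=\V_S(x=b)\cup\V_S(y=b)$ and the outside point $(a,a)$, where $a=\psi_{\al,\beta}(b)$; you do it in four variables with the diagonals $Y_1,Y_2$ and the point $q=(b,a,c,d)$. The difference is not merely cosmetic, and your version has two genuine gaps.

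First, the case you yourself call ``the hard part'' is never proved, and the tool you nominate for it does not close it. If $t$ depends only on $x_1$ (e.g.\ $t(x_1)=x_1$), no collapse of $t$ is available, and in fact $t(b)=b\neq a=t(a)$; so the relation $t(a)=\psi_{\gamma,\delta}(t(b))$ from Lemma~\ref{l:for_bands_hom} cannot ``propagate agreement to $q$'' --- it points in exactly the direction you cannot use. What actually closes this case is a different move: evaluate the equation at the point $(b,b,c,d)\in Y_1$, giving $t(b)=s(b,b,c,d)$; then, if $s$ involves $x_3$ or $x_4$, the unary term $s(b,x_2,c,d)$ contains the constants $c,d\in S_\al$, so Lemma~\ref{l:for_bands} collapses the second slot and yields $s(b,b,c,d)=s(b,a,c,d)=s(q)$, whence $t(q)=t(b)=s(q)$; and if $s$ involves neither $x_3$ nor $x_4$, the equation involves only $x_1,x_2$, and since $Y_2$ projects onto the whole $(x_1,x_2)$-plane the equation holds identically on $S^2$, in particular at $(b,a)$. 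None of this is in your write-up; it stops at ``must be pushed most carefully.'' This missing case is precisely where the paper does its real work: its four-type case analysis, using all three points $(a,b),(b,a),(b,b)$ of $\M$, is the complete treatment of these low-support equations.

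Second, your point $q$ requires two distinct elements $c\neq d$ of $S_\al$, i.e.\ $|S_\al|\geq 2$; this is essential, because Lemma~\ref{l:for_bands} collapses $b$ to $\psi_{\al,\beta}(b)$ only when a constant from $S_\al$ \emph{itself} occurs in the substituted term. Your fallback covers only the situation where \emph{every} component is a singleton, but the problematic configurations are those where every available comparable pair $\al<\beta$ has a singleton bottom $S_\al$ while other components are nontrivial (e.g.\ a nontrivial group with a zero adjoined); these fall into neither of your branches. Moreover, the fallback's key claim --- that over a semilattice two meet-terms agreeing on $D$ must coincide --- is false as stated: in a semilattice with least element $0$, the terms $x\wedge 0$ and $0$ agree everywhere without coinciding. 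The paper's choice of union avoids the cardinality issue entirely: the constant $b$ is built into the defining equations $x=b$, $y=b$, the outside point is $(a,a)$, and no second pair of distinct elements of $S_\al$ is ever needed, so the argument works for arbitrary component sizes.
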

\begin{proof}
Let $\al,\beta\in\Omega$, $\al<\beta$, $b\in S_\beta$. By $a\in S_\al$ denote $\psi_{\al\beta}(b)$. 

Assume the converse: the set 
\[
\M=\{(x,y)|x=b\mbox{ or } y=b\}=\V_S(x=b)\vee\V_S(y=b)
\]
is the solution set of a system $\Ss$. As $P=(a,a)\notin\M$, there exists an equation $t(x,y)=s(x,y)\in \Ss$ with $t(a,a)\neq s(a,a)$.

Observe that the equation $t(x,y)=s(x,y)$ should contain the occurrences of the both variables $x,y$. Otherwise, if it depends on a single variable $x$ (or $y$) we have $t(a)=s(a)$, as  $(a,b)\in\M$ (or $(b,a)\in\M$). However, by the choice of the equation, $t(a)\neq s(a)$, and we came to the contradiction.

Thus, the equation $t(x,y)=s(x,y)$ has one of the following types:
\begin{enumerate}
\item any part of the equation $t(x,y)=s(x,y)$ contains the both variables $x,y$;
\item the first part of the equation depends on a single variable, whereas the second one depends on two variables; 
\item the first part of the equation does not contain $y$, and the second part does not contain $x$;
\item the first part of the equation depends on the both variables, but the second part does not contain any variable. 
\end{enumerate}

Let us consequently consider all types of the equation $t(x,y)=s(x,y)$.

\begin{enumerate}
\item Let $t^\pr(x)=t(x,a)$, $s^\pr(x)=s(x,a)$. As $(b,a)\in\M$, then $t^\pr(b)=s^\pr(b)$. 
By Lemma~\ref{l:for_bands}, we have $t^\pr(a)=t^\pr(b)$, $s^\pr(a)=s^\pr(b)$, hence $t^\pr(a)=s^\pr(a)$, that contradicts with the choice of $t(x,y)=s(x,y)$.
\item In this case we have the equation $t(x)=s(x,y)$. According Lemma~\ref{l:for_bands}, we have $s(a,b)=s(a,a)$. As $(a,b)\in\M$, we obtain $t(a)=s(a,b)$. Thus, $t(a)=s(a,a)$ and we came to the contradiction with the choice of the equation $t(x)=s(x,y)$.
\item For the equation $t(x)=s(y)$ we have $t(b)=s(a)$ (as $(b,a)\in\M$), $t(a)=s(b)$ (since $(a,b)\in\M$), $t(b)=s(b)$ (as $(b,b)\in\M$). Thus, $t(a)=s(a)$, that contradicts with the choice of the equation $t(x)=s(x)$
\item For $t(x,y)=\c$ we have $t(a,b)=\c$  ($(a,b)\in\M$). By Lemma~\ref{l:for_bands}, obtain $t(a,b)=t(a,a)=\c$, and come to the contradiction.
\end{enumerate}
\end{proof}

\begin{corollary}
\label{cor:clifford->completely_simple}
Is a c.r. semigroup $S$ is an equational domain in the language $\LL_S=\{\cdot,{ }^{-1}\}\cup\{s|s\in S\}$, then $S$ is completely simple.
\end{corollary}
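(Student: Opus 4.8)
The plan is to read the corollary off directly from Theorem~\ref{th:about_semilattices}, using the structure theorem as the bridge. Since the corollary is stated for an arbitrary c.r. equational domain but Theorem~\ref{th:about_semilattices} is phrased in terms of the semilattice decomposition $S=\{S_\al\mid\al\in\Omega\}$, the first step is simply to invoke Theorem~\ref{th:clifford_semigroups_structure}: any c.r. semigroup $S$ is isomorphic to a semilattice of completely simple semigroups, so without loss of generality we may present $S$ in the form $S=\{S_\al\mid\al\in\Omega\}$ for some semilattice $\Omega$ with completely simple components $S_\al$.

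The second and central step is a contrapositive argument on the cardinality of $\Omega$. Assuming $S$ is an equational domain in $\LL_S$, I would observe that Theorem~\ref{th:about_semilattices} rules out the case $|\Omega|>1$ outright: if $|\Omega|>1$ held, then $S$ would fail to be an equational domain, contradicting the hypothesis. Since $S$ is nonempty, $\Omega$ is nonempty as well, so the only remaining possibility is $|\Omega|=1$.

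The final step is to unwind what $|\Omega|=1$ means for the decomposition. When $\Omega$ has a single element $\al$, the semilattice of semigroups degenerates and $S=S_\al$ coincides with its unique completely simple component. Hence $S$ is itself completely simple, which is exactly the conclusion of the corollary; one should also note that the language $\LL_S$ appearing in the corollary is precisely the language $\{\cdot,{}^{-1}\}\cup\{s\mid s\in S\}$ used throughout, so no adjustment of the ambient language is needed.

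I do not expect any serious obstacle here, since all the work has already been absorbed into Theorem~\ref{th:about_semilattices} and its supporting Lemmas~\ref{l:value_of_term_in_semilattice}--\ref{l:for_bands}. The only point requiring a moment's care is the passage from $|\Omega|=1$ to ``$S$ is completely simple'': one must confirm that a semilattice of a single completely simple semigroup is that semigroup itself, with its original multiplication, so that complete simplicity transfers verbatim. This is immediate from the trivial homomorphism $\psi_{\al,\al}$ and the multiplication rule~(\ref{eq:multiplication_in_lattice_of_semigroups}), but it is the one detail I would state explicitly rather than leave implicit.
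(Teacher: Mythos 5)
Your proof is correct and is exactly the argument the paper intends: the corollary is an immediate consequence of Theorem~\ref{th:about_semilattices} combined with the decomposition of Theorem~\ref{th:clifford_semigroups_structure}, forcing $|\Omega|=1$ and hence $S=S_\al$ completely simple. The paper leaves this reasoning implicit, and your only addition is making the trivial case $|\Omega|=1$ explicit, which matches the intended proof.
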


The information of the author:

Artem N. Shevlyakov

Omsk Branch of Institute of Mathematics, Siberian Branch of the Russian Academy of Sciences

644099 Russia, Omsk, Pevtsova st. 13

Phone: +7-3812-23-25-51.

e-mail: \texttt{a\_shevl@mail.ru}
\end{document}